\newtheorem{theo}{Theorem}
\newtheorem{lem}[theo]{Lemma}
\theoremstyle{definition}
\theoremstyle{remark}
\newcounter{casenum}[theo]
\newcounter{subcasenum}[theo]
\newcounter{claimnum}[theo]
\begin{document}
\thispagestyle{plain}

\begin{center} {\Large Isolation of the diamond graph
}
\end{center}
\pagestyle{plain}
\begin{center}
{
  {\small  Jingru Yan \footnote{ E-mail address: mathyjr@163.com}}\\[3mm]
  {\small  Department of Mathematics, East China Normal University, Shanghai 200241,  China }\\

}
\end{center}

\begin{center}

\begin{minipage}{140mm}
\begin{center}
{\bf Abstract}
\end{center}
{\small   A graph is $H$-free if it does not contain $H$ as a subgraph. The diamond graph is the graph obtained from $K_4$ by deleting one edge. We prove that if $G$ is a connected graph with order $n\geq 10$, then there exists a subset $S\subseteq V(G)$ with $|S|\leq n/5$ such that the graph induced by $V(G)\setminus N[S]$ is diamond-free, where $N[S]$ is the closed neighborhood of $S$. Furthermore, the bound is sharp.

{\bf Keywords.} Diamond graph, isolating set, isolation number }

{\bf Mathematics Subject Classification.} 05C35, 05C69
\end{minipage}
\end{center}

\section{Introduction}
Let $G$ be a finite simple graph with vertex set $V(G)$ and edge set $E(G)$. The order and the size of a graph $G$, denoted $|V(G)|$ and $|E(G)|$, are its number of vertices and edges, respectively. For a subset $S\subseteq V(G)$, the neighborhood of $S$ is the set $N_G(S)=\{u\in V(G)\setminus S\mid uv\in E(G), v\in S\}$ and closed neighborhood of $S$ is the set $N_G[S]=N_G(S)\cup S$. Thus $N_G(v)$ and $N_G[v]$ denote the neighborhood and closed neighborhood of $v\in V(G)$, respectively. The degree of $v$ is $d_G(v)=|N_G(v)|$. If the graph $G$ is clear from the context, we will omit it as the subscript. $\delta(G)$ and $\Delta(G)$ denote the minimum and maximum degree of a graph $G$, respectively. Denote by $G[S]$ the subgraph of $G$ induced by $S \subseteq V(G)$. For terminology and notations not explicitly described in this paper, readers can refer to related books \cite{BM,W}.

Given graphs $G$ and $H$, the notation $G + H$ means the \emph{disjoint union} of $G$ and $H$. Then $tG$ denotes the disjoint union of $t$ copies of $G$. For graphs we will use equality up to isomorphism, so $G = H$ means that $G$ and $H$ are isomorphic. A graph is $H$-free if it does not contain $H$ as a subgraph.  $\kappa(G)$ and $\gamma(G)$ denote the connectivity and domination number of a graph $G$, respectively. $P_n, C_n, K_n$ and $K_{p,q}$ stand for the \emph{path}, \emph{cycle}, \emph{complete graph} of order $n$ and \emph{complete bipartite graph} with partition sets of $p$ and $q$ vertices, respectively.

Let $G$ be a graph and $\mathcal{F}$ a family of graphs. A subset $S\subseteq V(G)$ is called an $\mathcal{F}$-isolating set of $G$ if $G-N[S]$ contains no subgraph isomorphic to any $F \in\mathcal{F}$. The minimum cardinality of an $\mathcal{F}$-isolating set of a graph $G$ will be denoted $\iota(G,\mathcal{F})$ and called the $\mathcal{F}$-isolation number of $G$. When $\mathcal{F}=\{H\}$, we simply write $\iota(G,H)$ for $\iota(G,\{H\})$.

The definition of isolation set is a natural extension of the commonly defined dominating set, which was introduced by Caro and Hansberg \cite{CH}. Indeed, if $\mathcal{F}= \{K_1\}$, then an $\mathcal{F}$-isolating set coincides with a dominating set and $\iota(G,\mathcal{F})=\gamma(G)$. A classical result of Ore \cite{O} is that the domination number of a graph $G$ with order $n$ and $\delta(G)\geq 1$ is at most $n/2$. In other words, if $G$ is a connected graph of order $n\geq 2$, then $\iota(G,K_1)\leq n/2$. Caro and Hansberg \cite{CH} focused mainly on $\iota(G,K_2)$ and $\iota(G,K_{1,k+1})$ and gave some basic properties, examples concerning $\iota(G,\mathcal{F})$ and the relation between $\mathcal{F}$-isolating sets and dominating sets. They \cite{CH} proved that if $G$ is a connected graph of order $n\geq 3$ that is not a $C_5$, then $\iota(G,K_2)\leq \frac{n}{3}$. Since then, Borg \cite{B} showed that if $G$ is a connected graph of order $n$, then $\iota(G,\{C_k : k\geq 3\})\leq \frac{n}{4}$ unless $G$ is $K_3$. After that, Borg, Fenech and Kaemawichanurat \cite{BFK} proved that if $G$ is a connected graph of order $n$, then $\iota(G,K_k)\leq \frac{n}{k+1}$ unless $G$ is $K_k$, or $k = 2$ and $G$ is $C_5$. Both the bounds are sharp. Then Zhang and Wu \cite{ZW} gave the result that if $G$ is a connected graph of order $n$, then $\iota(G,P_3)\leq \frac{2n}{7}$ unless $G\in \{P_3,C_3,C_6\}$, and this bound can be improved to $\frac{n}{4}$ if $G\notin \{P_3,C_7,C_{11}\}$ and the girth of $G$ at least 7. For more research on isolation set, refer to \cite{STP,BK,FK}.

The \emph{diamond graph} is the graph obtained from $K_4$ by deleting one edge (see Figure 1). The \emph{book graph} with $p$ pages, denoted $B_p$, is the graph that consists of $p$ triangles sharing a common edge. Obviously, $B_2$ is the diamond graph. For the convenience of expression, we use $B_2$ to represent the diamond graph in the sequel.

\begin{figure}[tp]
\subfigure
  {
  \begin{minipage}[b]{.4\linewidth}
    \centering
    \includegraphics[scale=0.7]{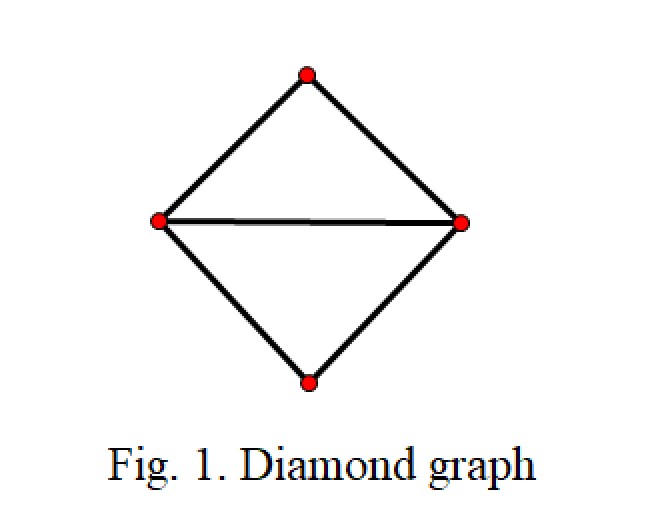}
  \end{minipage}
  }
\subfigure
  {
  \begin{minipage}[b]{.6\linewidth}
    \centering
    \includegraphics[scale=0.7]{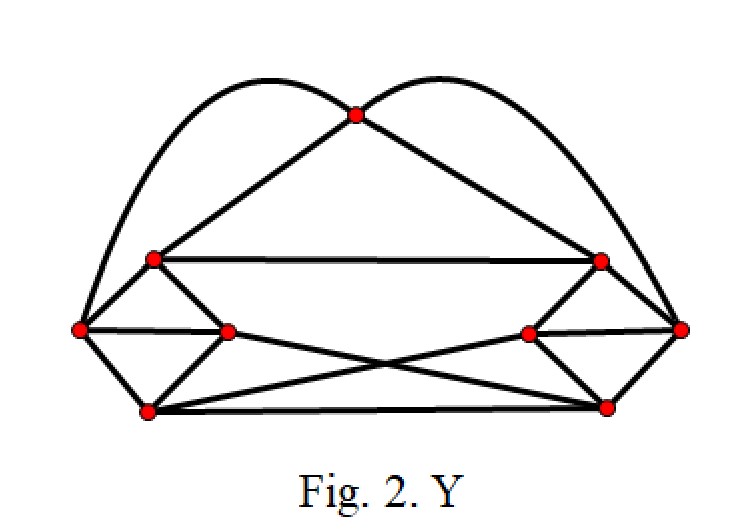}
  \end{minipage}
  }
\end{figure}

In this paper, we consider the isolation number of the diamond graph in a connected graph of a given order.
\begin{theo}\label{th1}
If $G$ is a connected graph of order $n$, then, unless $G$ is the diamond graph, $K_4$, or $Y$,
$$\iota(G,B_2)\leq \frac{n}{5},$$ where $Y$ is shown in the Figure 2.
\end{theo}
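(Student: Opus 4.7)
I would proceed by strong induction on $n=|V(G)|$. The base cases ($n\le 9$) are handled by direct inspection: one verifies that every connected graph on at most $9$ vertices satisfies $\iota(G,B_2)\le n/5$, with the sole exceptions $B_2$ (order $4$, $\iota=1$), $K_4$ (order $4$, $\iota=1$), and $Y$ (the graph in Figure~2). For $n<5$ the only connected graphs containing a diamond are $B_2$ and $K_4$ themselves, so this restricts the inspection to $5\le n\le 9$.

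For the inductive step, let $G$ be a connected graph of order $n\ge10$ that is not exceptional. If $G$ is $B_2$-free, then $S=\varnothing$ works. Otherwise, fix a diamond $D=\{a,b,c,d\}\subseteq V(G)$ where $ab$ is the edge shared by the two triangles. The strategy is to select a single vertex $v$ satisfying
\[
|N[v]|\ge 5 \quad\text{and}\quad V(D)\cap N[v]\ne\varnothing,
\]
place $v$ in $S$, delete $N[v]$ (which destroys $D$), and recurse on each component $G_1,\dots,G_t$ of $G-N[v]$. Applying the inductive hypothesis to each $G_i$ yields isolating sets $S_i$ with $|S_i|\le |V(G_i)|/5$; then $S=\{v\}\cup\bigcup_i S_i$ gives
\[
|S|\;\le\;1+\frac{n-|N[v]|}{5}\;\le\;\frac{n}{5}.
\]
The existence of a suitable $v$ reduces to degree casework on $D$. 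If $d(a)\ge 4$ or $d(b)\ge 4$, that vertex already has $|N[v]|\ge 5$. Otherwise $d(a)=d(b)=3$, so $a,b$ have no neighbors outside $D$; since $G$ is connected with $n\ge 10$, at least one of $c,d$ has external edges, and a suitable $v$ can be chosen from $\{c,d\}$ or among the external neighbors of $D$ according to how the external degree is distributed.

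\emph{Main obstacle.} The hardest part of the proof will be controlling the components of $G-N[v]$. The inductive bound breaks whenever some $G_i$ itself is $B_2$, $K_4$, or $Y$ (each such component contributes an unwanted $+1$ to the sum), and a direct argument is required when $|V(G_i)|<10$ since induction does not immediately apply. To close these gaps, I would either show that $v$ can always be re-selected so that $G-N[v]$ produces no exceptional or small component, or exploit extra slack in $|N[v]|$ beyond the minimum $5$ to absorb a bounded number of bad components. This calls for a delicate structural analysis around the chosen diamond: one must understand exactly how external neighbors of $D$ can embed into the rest of $G$, and the precise shape of the graph $Y$ in Figure~2 will pin down which local configurations need special attention. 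Sharpness would be demonstrated separately by an explicit infinite family (likely built by gluing blocks whose ratio of vertices to required isolators is exactly $5$), confirming that $n/5$ cannot be improved.
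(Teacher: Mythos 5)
Your overall skeleton --- strong induction on $n$ with base cases $n\le 9$ checked directly, then deleting the closed neighbourhood of a well-chosen vertex and summing the bound $|V(G_i)|/5$ over the components of what remains via Lemmas of the type \ref{lem1} and \ref{lem2} --- is exactly the strategy of the paper. But what you have written is a plan rather than a proof, and the part you defer under ``Main obstacle'' is where essentially all of the content of the theorem lives. Two specific points. First, your concern about components with $|V(G_i)|<10$ is a non-issue: the statement is proved for every order, with the small orders as base cases, so induction applies to small components too; the only genuine failures are components isomorphic to $B_2$, $K_4$ or $Y$, for which the bound $|V(G_i)|/5$ is simply false. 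Second, you give no argument that the vertex $v$ can always be ``re-selected'' so that no exceptional component arises, and in general it cannot be: the paper does not avoid such components but absorbs them, through a lengthy case analysis (Cases 1--2 of Lemma \ref{th3} and Subcases 2.1--2.4 of Lemma \ref{th4}) that sorts the components of $G-N[u]$ by type, exploits structural properties of $Y$ (Lemma \ref{lem4}, e.g.\ that deleting one vertex together with a suitable closed neighbourhood from $Y$ leaves a $P_3$), transfers pendant triangles and edges via Lemma \ref{lem5}, and in several configurations replaces the initial vertex by a neighbour ($u_1$, $u_2$, $x_2$, \dots), each time re-verifying connectivity and that no new exceptional component is created.

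None of that is routine, and it is precisely this analysis that certifies $Y$ as the unique extra exception beyond $B_2$ and $K_4$; your base-case inspection would detect $Y$ at $n=9$, but the inductive step must still show that copies of $Y$ (and of $B_2$, $K_4$) appearing as components of $G-N[v]$ for $n\ge 10$ never force the count above $n/5$. As it stands your argument establishes only the easy branch, where every component satisfies the inductive bound and one gets $1+(n-5)/5=n/5$, and leaves the hard branch open. The approach is the right one, but the proof is incomplete until the exceptional-component casework is actually carried out.
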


\section{Main results}
From the proof of Theorem 3.8 in this paper \cite{CH}, we obtain Lemma \ref{lem9} and give an example that satisfies the Lemma, see Figure 3. The minimum cardinality of a $B_2$-isolating set of the graph $H$ of order 15 is 3.

\begin{lem}\label{lem9}
There exists a connected graph $G$ of order $n$ such that $\iota(G,B_2)= \frac{n}{5}$.
\end{lem}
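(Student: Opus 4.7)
The plan is to exhibit a concrete connected graph $G$ of order $15$ whose $B_2$-isolation number is exactly $3$; more generally, the construction gives, for every positive integer $t$, a connected graph of order $5t$ with isolation number $t$, which proves the lemma. I would build $G$ by taking three vertex-disjoint copies of $K_5$, say $K^{(1)}, K^{(2)}, K^{(3)}$, label the vertices of $K^{(i)}$ as $v_1^{(i)},v_2^{(i)},\ldots,v_5^{(i)}$, and join consecutive copies by the two bridge edges $v_1^{(1)}v_1^{(2)}$ and $v_1^{(2)}v_1^{(3)}$. This graph is clearly connected with $15$ vertices, and is a candidate for the graph $H$ appearing in Figure 3.

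For the upper bound $\iota(G,B_2)\le 3$, I would just take $S=\{v_2^{(1)},v_2^{(2)},v_2^{(3)}\}$. Because each $v_2^{(i)}$ is adjacent in $K^{(i)}$ to every other vertex of its copy, $N[v_2^{(i)}]\supseteq V(K^{(i)})$, so $N[S]=V(G)$ and $G-N[S]$ is empty, hence vacuously $B_2$-free.

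The matching lower bound is the main obstacle, and the argument hinges on the fact that the bridges have been made intentionally thin. The first key observation is that any $4$ vertices of $K^{(i)}$ induce a $K_4$, which contains $B_2$; so every $B_2$-isolating set $S$ must satisfy $|N[S]\cap V(K^{(i)})|\ge 2$ for each $i$. The second key observation is that $v_1^{(i)}$ is the only vertex of $K^{(i)}$ having a neighbour outside $K^{(i)}$, which forces $N[s]\cap V(K^{(i)})\subseteq\{v_1^{(i)}\}$ for every $s\in V(G)\setminus V(K^{(i)})$. Consequently, if $S$ were disjoint from $V(K^{(i)})$, then $|N[S]\cap V(K^{(i)})|\le 1$, contradicting the required bound of $2$. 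Hence $S$ must meet each $V(K^{(i)})$, giving $|S|\ge 3$. Combining with the upper bound yields $\iota(G,B_2)=3=n/5$, as required, and the same argument scaled to $t$ copies delivers the general family.
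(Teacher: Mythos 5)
Your proof is correct, and it is self-contained in a way the paper's is not: the paper simply cites the construction behind Theorem 3.8 of Caro and Hansberg and points to a picture of a $15$-vertex graph $H$ with $\iota(H,B_2)=3$, without verifying the lower bound. The standard construction being invoked there attaches a pendant copy of the diamond (via a single edge) to each vertex of a small connected graph, so that each $5$-vertex block forces its own vertex of the isolating set; your chain of three $K_5$'s joined by bridge edges is a genuinely different extremal example, but the lower-bound mechanism is the same in spirit (each block contains a $B_2$ that can only be destroyed from inside the block, because only one vertex per block sees the outside). Both arguments are sound; yours has the advantages of being explicit, of actually proving the matching lower bound rather than asserting it, and of immediately scaling to a family of order $5t$ for every $t\geq 1$, whereas the paper's statement as written only needs a single example. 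One small presentational point: it would be worth saying explicitly that every copy of $B_2$ in your graph lies entirely inside one $K^{(i)}$ (the bridges create no new triangles), although your lower-bound argument does not actually rely on this, since it only uses the fact that four surviving vertices of any $K^{(i)}$ already induce a $K_4\supseteq B_2$.
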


\begin{figure}[h]
  \centering
  \includegraphics[scale=0.8]{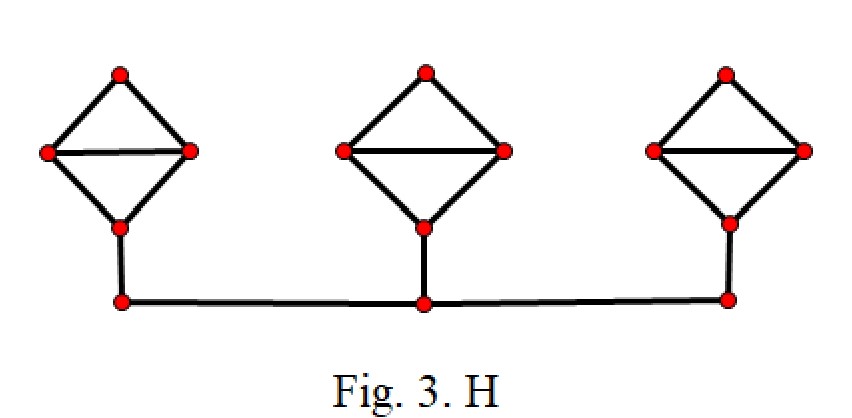}

\end{figure}

We start with two lemmas that will be used repeatedly.

\begin{lem}\label{lem1}\cite{B}
If $G$ is a graph, $\mathcal{F}$ is a set of graphs, $A\subseteq V(G)$, and $B \subseteq N[A]$, then
$$\iota(G,\mathcal{F})\leq |A|+ \iota(G-B,\mathcal{F}).$$ In particular, if $A=\{v\}$ and $B=N[A]$, then $\iota(G,\mathcal{F})\leq 1+ \iota(G-N[v],\mathcal{F})$.
\end{lem}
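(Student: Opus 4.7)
My plan is to prove this existence statement by exhibiting an explicit infinite family of connected graphs $\{G_k\}_{k\geq 1}$ of order $n=5k$ for which the $B_2$-isolation number equals exactly $n/5=k$; the case $k=3$ then produces a connected graph of order 15 with $\iota=3$, matching the graph $H$ referenced in Figure 3. The construction is to take $k$ pairwise disjoint copies $K_5^{(1)},\dots,K_5^{(k)}$ of $K_5$, select one vertex $v_i\in V(K_5^{(i)})$ from each copy, and join consecutive copies by the edges $v_iv_{i+1}$ for $i=1,\dots,k-1$; the resulting graph $G_k$ is clearly connected and has order $5k$.

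For the upper bound $\iota(G_k,B_2)\leq k$, I would simply take $S_0=\{v_1,\dots,v_k\}$: since $N[v_i]\supseteq V(K_5^{(i)})$ for each $i$, we have $N[S_0]=V(G_k)$, so $G_k-N[S_0]$ is empty and is vacuously $B_2$-free. The matching lower bound reduces to showing that every $B_2$-isolating set $S$ must satisfy $S\cap V(K_5^{(i)})\neq\emptyset$ for every $i$, which then forces $|S|\geq k$. The key structural observation is that the only neighbors of $V(K_5^{(i)})$ lying outside $K_5^{(i)}$ are $v_{i-1}$ and $v_{i+1}$, and each of these is adjacent to exactly the single vertex $v_i$ inside $K_5^{(i)}$. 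Hence if $S$ misses $V(K_5^{(i)})$ entirely, then $N[S]\cap V(K_5^{(i)})\subseteq\{v_i\}$, so at least four vertices of $K_5^{(i)}$ survive in $G_k-N[S]$; these induce a copy of $K_4$, which contains $B_2$, contradicting the isolating property.

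Since this is an existence statement, there is no real obstacle; the only care needed is in the lower bound, specifically in ensuring that no vertex of $S$ lying outside a copy $K_5^{(i)}$ can cover more than one vertex of that copy. Linking the copies only through the single bridging edges $v_iv_{i+1}$, rather than through richer bipartite connections, is precisely what guarantees this sparseness and hence the tight lower bound $|S|\geq k=n/5$.
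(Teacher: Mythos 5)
There is a fundamental mismatch here: you have proved the wrong statement. The lemma under review is the general inequality $\iota(G,\mathcal{F})\leq |A|+\iota(G-B,\mathcal{F})$, which must hold for \emph{every} graph $G$, every family $\mathcal{F}$, every $A\subseteq V(G)$ and every $B\subseteq N[A]$. It is not an existence statement. What you have written is a (correct, and quite clean) proof of the paper's Lemma~\ref{lem9}, the sharpness example asserting that some connected graph of order $n$ attains $\iota(G,B_2)=n/5$; your chain of $K_5$'s joined by bridging edges $v_iv_{i+1}$ does establish that, and the lower-bound argument via ``$N[S]\cap V(K_5^{(i)})\subseteq\{v_i\}$ when $S$ misses the $i$-th copy'' is sound. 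But none of this bears on Lemma~\ref{lem1}, which the paper does not prove at all (it is quoted from~\cite{B}).

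For the record, the statement you were asked to prove goes as follows. Let $S'$ be a minimum $\mathcal{F}$-isolating set of $G-B$ and set $S=A\cup S'$. Since $B\subseteq N_G[A]\subseteq N_G[S]$ and $N_{G-B}[S']\subseteq N_G[S]$, we have $V(G)\setminus N_G[S]\subseteq \bigl(V(G)\setminus B\bigr)\setminus N_{G-B}[S']$, so $G-N_G[S]$ is an induced subgraph of $(G-B)-N_{G-B}[S']$, which contains no subgraph isomorphic to any $F\in\mathcal{F}$; hence neither does $G-N_G[S]$, and $\iota(G,\mathcal{F})\leq |S|\leq |A|+\iota(G-B,\mathcal{F})$. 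The special case $A=\{v\}$, $B=N[v]$ is immediate. You should supply an argument of this form rather than a sharpness construction.
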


\begin{lem}\label{lem2}\cite{B}
If $G_1,\ldots,G_k$ are the distinct components of a graph $G$, then  $$\iota(G,\mathcal{F})=\sum_{i=1}^{k}\iota(G_i,\mathcal{F}).$$
\end{lem}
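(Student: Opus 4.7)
The plan is to establish both inequalities separately, using the observation that, because components share no edges, $N_G[v]=N_{G_i}[v]$ for every $v\in V(G_i)$. Hence for any $S\subseteq V(G)$, setting $S_i:=S\cap V(G_i)$, the induced subgraph $G-N_G[S]$ is the disjoint union of the induced subgraphs $G_i-N_{G_i}[S_i]$, and any connected subgraph of $G-N_G[S]$ must lie entirely within one of these pieces.

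For the upper bound $\iota(G,\mathcal{F})\le \sum_{i=1}^{k}\iota(G_i,\mathcal{F})$, I would pick a minimum $\mathcal{F}$-isolating set $S_i$ in each component $G_i$ and take their union $S$. Any copy of some $F\in\mathcal{F}$ sitting inside $G-N_G[S]$ would, under the standard assumption that the members of $\mathcal{F}$ are connected (which is satisfied by $B_2$), be confined to a single $G_i-N_{G_i}[S_i]$, contradicting the isolating property of $S_i$. Thus $S$ is $\mathcal{F}$-isolating, and disjointness gives $|S|=\sum_i\iota(G_i,\mathcal{F})$.

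For the lower bound, I would start with a minimum $\mathcal{F}$-isolating set $S$ of $G$ and again split it as $S_i=S\cap V(G_i)$. Since $G_i-N_{G_i}[S_i]$ is a subgraph of $G-N_G[S]$, it is $F$-free for every $F\in\mathcal{F}$, so each $S_i$ is $\mathcal{F}$-isolating in $G_i$ and hence $\iota(G_i,\mathcal{F})\le|S_i|$. Summing over $i$ and using $|S|=\sum_i|S_i|$ yields $\sum_i\iota(G_i,\mathcal{F})\le\iota(G,\mathcal{F})$, completing the proof. The only delicate point is the connectedness of members of $\mathcal{F}$ in the upper-bound direction, without which a disconnected $F$ could straddle two components; this assumption is standard in this setting and certainly holds for the target family $\{B_2\}$.
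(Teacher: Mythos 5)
Your argument is correct and complete; the paper itself gives no proof of this lemma, citing it from Borg's paper, and your two-inequality argument via $N_G[v]=N_{G_i}[v]$ and $S_i=S\cap V(G_i)$ is exactly the standard proof. You are also right to flag that the upper-bound direction needs every member of $\mathcal{F}$ to be connected (otherwise a disconnected $F$ could straddle components), a hypothesis the paper leaves implicit but which holds for the only family used here, $\{B_2\}$.
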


For any graph $G$, let $A,B\subseteq V(G)$ and $A\cap B=\phi$. Denote by $E(A,B)$ the set of edges of $G$ with one end in
$A$ and the other end in $B$ and $e(A,B)=|E(A,B)|$. We abbreviate $E(\{x\},B)$ to $E(x,B)$ and $e(\{x\},B)$ to $e(x,B)$.

Now, we first prove Theorem \ref{th1} when the order $n\leq 9$.
\begin{lem}\label{lem3}
Let $G$ be a connected graph of order $n\leq 9$. Then
$\iota(G,B_2)\leq \frac{n}{5}$ except for $G\in \{B_2,K_4,Y\}$.
\end{lem}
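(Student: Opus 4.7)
The plan is to split on the order $n$ and use that $n/5<2$ throughout, so the target inequality reduces to showing $\iota(G,B_2)\in\{0,1\}$; when $G$ is diamond-free we automatically have $\iota(G,B_2)=0\le n/5$, so hereafter I assume $G$ contains some diamond $D_1=\{a_1,b_1,c_1,d_1\}$ with $a_1b_1$ the shared edge, i.e.\ $a_1$ and $b_1$ are the two vertices of degree three inside $D_1$ and $c_1d_1\notin E(G[D_1])$.

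For $n\le 4$: any such $G$ has exactly four vertices and at least five edges, and on four vertices the only graphs with at least $5$ edges are $B_2$ and $K_4$, both excluded by hypothesis; hence $G$ is automatically $B_2$-free and $\iota(G,B_2)=0$.

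For $5\le n\le 7$: I would simply take $v=a_1$ and invoke Lemma~\ref{lem1}. Since $D_1\subseteq N_G[a_1]$, we get $|V(G)\setminus N_G[a_1]|\le n-4\le 3$, so $G-N[a_1]$ has fewer than four vertices and therefore contains no diamond. Hence $\iota(G,B_2)\le 1\le n/5$.

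For $n=8$ or $n=9$: the above attempt can now fail, but only if $G-N[a_1]$ itself contains a second diamond $D_2=\{a_2,b_2,c_2,d_2\}$. I would first observe that this forces a rigid structure: $D_1\cap D_2=\emptyset$ (since $D_2\subseteq V(G)\setminus N[a_1]$ while $D_1\subseteq N[a_1]$), together with $d_G(a_1)=d_G(b_1)=3$, hence $N[a_1]=N[b_1]=D_1$, and symmetrically $d_G(a_2)=d_G(b_2)=3$ with $N[a_2]=N[b_2]=D_2$. Consequently every edge between $D_1$ and $D_2$, and when $n=9$ every edge to the remaining vertex $u$, must have both endpoints in $\{c_1,d_1,c_2,d_2,u\}$. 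A short case analysis on which of these edges are present — guided by the requirement that $G$ be connected — lets me exhibit, in each configuration, some vertex $v\in\{c_1,d_1,c_2,d_2,u\}$ for which $|V(G)\setminus N[v]|\le 3$ or for which the remaining vertices fail to form a diamond; the single configuration in which no such $v$ exists should be exactly the graph $Y$ of Figure~2.

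The main obstacle will be the case analysis for $n=8$ and $n=9$: I must enumerate the possible edge sets between $\{c_1,d_1\}$ and $\{c_2,d_2\}$, together with the attachments of $u$ when $n=9$, and verify both that a single-vertex isolator exists in every configuration and that the unique exceptional graph produced is indeed $Y$. The earlier ranges of $n$ are essentially a counting argument and should occupy only a few lines.
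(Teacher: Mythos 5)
Your reductions for $n\le 4$ and $5\le n\le 7$ are fine and match the spirit of the paper's opening step (the paper picks a maximum-degree vertex, you pick a degree-three vertex of a diamond; either way the residual graph has at most three vertices). The problem is the ``rigid structure'' you claim for $n=8,9$. The implication ``$G-N[a_1]$ contains a diamond $\Rightarrow d_G(a_1)=3$'' only follows from the count $n-1-d(a_1)\ge 4$ when $n=8$; for $n=9$ it only gives $d(a_1)\le 4$, and likewise for $b_1,a_2,b_2$ (even granting the implicit ``or else that vertex is itself an isolator'' step). So for $n=9$ you cannot conclude $N[a_1]=N[b_1]=D_1$ and $N[a_2]=N[b_2]=D_2$, and the configuration space is not confined to edges among $\{c_1,d_1,c_2,d_2,u\}$.

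This is not a peripheral omission: the exceptional graph $Y$ is $4$-regular (Lemma \ref{lem4}(2)), so every degree-three vertex of every diamond in $Y$ has a neighbor outside that diamond, and $Y$ violates your rigid structure outright. Your case analysis as described would therefore never encounter $Y$, contradicting your own claim that the unique surviving configuration ``should be exactly $Y$.'' The missing content is precisely the paper's Case 2 ($n=9$, $\Delta(G)=4$, with $F=G-N[u]\in\{B_2,K_4\}$ and the subcases on how $N(u)$ attaches to $F$), which is where essentially all of the work in the paper's proof of this lemma is done and where $Y$ is identified. To repair your argument you would need to add a separate analysis of the $n=9$, $\Delta(G)=4$ situation in which one of the two diamonds' degree-three vertices has a fourth neighbor; the $n=8$ branch and the rigid sub-branch of $n=9$ are fine as you have them.
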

\begin{proof}
Let $G$ be a connected graph of order $n$. The result is trivial if $n \leq 4$ or $\iota(G, B_2) = 0$. Suppose $5\leq n\leq 9$ and $\iota(G, B_2) \geq 1$. Then we need to show that $G$ has a $B_2$-isolating set $S$ with $|S|=1$ except for $G=Y$.

Since $\iota(G, B_2) \geq 1$, it follows $G$ contains $B_2$ and $\Delta(G)\geq 3$. Let $x\in V(G)$ such that $d(x)=\Delta(G)$. Of course, $S=\{x\}$ is a $B_2$-isolating set of $G$ if $G-N[x]$ is $B_2$-free. Otherwise, it implies that $n=8$ with $\Delta(G)= 3$ or $n=9$ with $3\leq \Delta(G)\leq 4$. We distinguish two cases.

{\it Case 1.} $\Delta(G)= 3$ and $n=8$ or $9$

Let $u\in V(G)$ such that $d(u)=3$ and $G[N[u]]=B_2$. If $G-N[u]$ is $B_2$-free, then $\iota(G, B_2) = 1\leq \frac{n}{5}$. So, suppose that $G-N[u]$ contains $B_2$. For $n=8$, obviously, $G-N[u]=B_2$. Since $G$ is a connected graph and $\Delta(G)= 3$, there is an edge $e=yz$ with $y\in N(u)$ and $z\in V(G)\setminus N[u]$. It is easy to check that $\{y\}$ or $\{z\}$ is a $B_2$-isolating set of $G$. Hence $\iota(G, B_2)=1 \leq \frac{n}{5}$. Now we prove the case of $n=9$. Let us consider a copy $H$ of $B_2$ in $G-N[u]$ and let $w$ be the remaining vertex of $G-N[u]-V(H)$. If there is an edge $e=yz$ with $y\in N(u)$ and $z\in V(H)$, then $\{y\}$ or $\{z\}$ is a $B_2$-isolating set of $G$. Otherwise, $w$ is a cut-vertex of $G$ and $G-N[w]$ is $B_2$-free. Hence $\iota(G, B_2)=1 \leq \frac{n}{5}$.

{\it Case 2.} $\Delta(G)= 4$ and $n=9$

Let $u\in V(G)$ such that $d(u)=4$ and let $F=G-N[u]$. We have $\iota(G, B_2)=1$ if $F$ is $B_2$-free. Assume that $F$ contains $B_2$. Since $|V(F)|=4$, then $F=B_2$ or $F=K_4$. The vertices are labeled as shown in the Figure 4. We distinguish two subcases.

\begin{figure}[h]
\subfigure
  {
  \begin{minipage}[b]{.4\linewidth}
    \centering
    \includegraphics[scale=0.8]{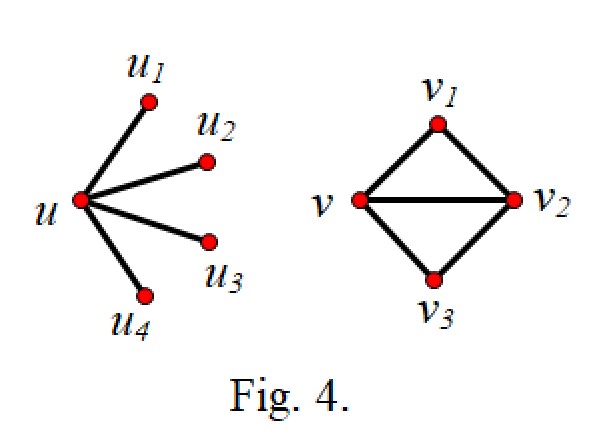}
  \end{minipage}
  }
\subfigure
  {
  \begin{minipage}[b]{.6\linewidth}
    \centering
    \includegraphics[scale=0.8]{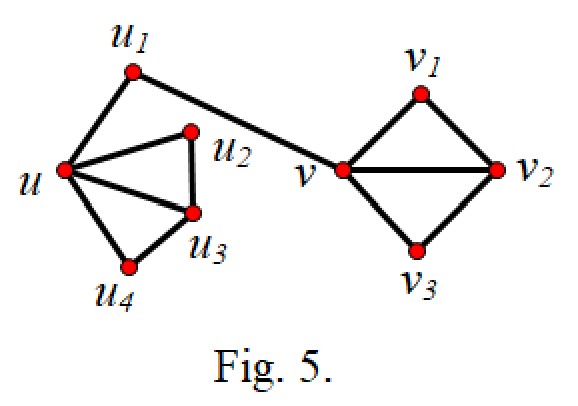}
  \end{minipage}
  }
\end{figure}

{\it Subcase 2.1.} $F=K_4$. Note that $e(N(u),V(F))\neq 0$. Without loss of generality, suppose $u_1$ is adjacent to $v$. If $G-N[v]$ is $B_2$-free, $\iota(G, B_2)=1$. Otherwise, $G-N[v]=K_4$ or $B_2$ since $|V(G)\setminus N[v]|=4$. For $G-N[v]=K_4$, we have $G-\{u,u_1,v\}$ is $B_2$-free. Thus, $\{u_1\}$ is a $B_2$-isolating set of $G$ and $\iota(G, B_2) \leq \frac{n}{5}$. For $G-N[v]=B_2$, $G-\{u,u_1,v\}$ contains $B_2$ if and only if $u_2$ or $u_4$ is adjacent to at least two vertices of $\{v_1,v_2,v_3\}$. Now we have $\{u_2\}$ or $\{u_4\}$ is a $B_2$-isolating set of $G$ and hence $\iota(G, B_2) \leq \frac{n}{5}$.

{\it Subcase 2.2.} $F=B_2$. The proof for this case is similar to Subcase 2.1. First suppose $F$ has a vertex of degree 3 that is adjacent to one vertex of $N(u)$. Without loss of generality, suppose $v$, $d_{F}(v)=3$, is adjacent to $u_1$. Then we have $\iota(G, B_2)=1$ if $G-N[v]$ is $B_2$-free. Otherwise, $G-N[v]$ contains $B_2$. For $G-N[v]=K_4$, by the proof of Subcase 2.1, $\iota(G, B_2) \leq \frac{n}{5}$. For $G-N[v]=B_2$, $G-\{u,u_1,v\}$ contains $B_2$ when one of the following four cases is true. (1) $u_2$ is adjacent to $v_1$ and $v_2$ and $u_3$ is adjacent to $v_1$. (2) $u_2$ is adjacent to $v_2$ and $v_3$ and $u_3$ is adjacent to $v_3$. (3) $u_3$ is adjacent to $v_1$ and $u_4$ is adjacent to $v_1$ and $v_2$. (4) $u_3$ is adjacent to $v_3$ and $u_4$ is adjacent to $v_2$ and $v_3$. We can see from Figure 5 that the proof methods are similar for the four cases. So let us just consider the first case. Note that $G-N[u_2]$ contains $B_2$ if and only if $G[u_1,u_4,v_3]=K_3$. Observe that $G=Y$.

Next suppose that only the vertices of degree 2 of $F$ are adjacent to the vertices of $N(u)$. Suppose $v_1$, $d_{F}(v_1)=2$, is adjacent to $u_1$. Then $\iota(G, B_2)=1$ if $G-\{u,u_1,v_1\}$ is $B_2$-free. Otherwise, since $e(v,N(u))=e(v_2,N(u))=0$, we have $G[u_2,u_3,u_4,v_3]$ contains $B_2$ as a subgraph. Recall that $\Delta(G)= 4$, then $G[u_2,u_3,u_4,v_3]=B_2$. Moreover, $G-N[v_3]$ is $B_2$-free. Thus, $\iota(G, B_2) \leq \frac{n}{5}$.

Hence, in all cases we obtain $\iota(G, B_2) \leq \frac{n}{5}$ with $n\leq 9$ except for $\iota(B_2, B_2)=1$, $\iota(K_4, B_2)=1$ and $\iota(Y, B_2)=2$.
\end{proof}

Next, we prove Theorem \ref{th1} when $\Delta(G)= 3$.

\begin{lem}\label{lem5}
Let $G$ be a connected graph of order $n$. $\iota(G',B_2)=\iota(G,B_2)$ if\\
(1) $G'$ is obtained from $G$ by attaching one edge to any vertex of $G$,\\
(2) $G'$ is obtained from $G$ by identifying one vertex of a triangle and a vertex of $G$,\\
(3) $G'$ is obtained from $G+K_3$ by adding an edge joining a vertex of $K_3$ and a vertex of $G$.
\end{lem}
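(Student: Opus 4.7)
The plan is to prove each of the three equalities via the pair of inequalities $\iota(G',B_2)\le\iota(G,B_2)$ and $\iota(G,B_2)\le\iota(G',B_2)$, relying on the structural observation that \emph{every copy of $B_2$ in $G'$ lies entirely inside $G$}. Since $B_2$ has minimum degree $2$ and contains two adjacent vertices of degree $3$, this is immediate in (1) (the appended pendant has degree $1$). In (2), the new vertices $b,c$ have degree $2$ in $G'$ with neighbor sets $\{v,c\}$ and $\{v,b\}$ respectively, so if $b$ were a degree-$2$ vertex of some $B_2$ then the two degree-$3$ vertices of $B_2$ would be $v$ and $c$, contradicting $d_{G'}(c)=2$. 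In (3), the same argument excludes $b$ and $c$, while $a$ (of degree $3$) cannot lie in $B_2$ either: a short neighborhood check using that $v$ is not adjacent to $b$ or $c$ in $G'$ rules out every possibility for the shared edge of a $B_2$ through $a$.

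For $\iota(G',B_2)\le\iota(G,B_2)$, I let $S$ be a minimum $B_2$-isolating set of $G$ and view it as a subset of $V(G')$. A copy of $B_2$ in $G'-N_{G'}[S]$ would, by the observation, have all its vertices in $V(G)\setminus N_{G'}[S]=V(G)\setminus N_G[S]$, yielding a $B_2$ in $G-N_G[S]$, which is impossible. Hence $S$ is $B_2$-isolating in $G'$.

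For the reverse inequality, I start from a minimum $B_2$-isolating set $S'$ of $G'$ and convert it to some $S''\subseteq V(G)$ with $|S''|\le|S'|$ that still isolates $B_2$ in $G'$; because $G$ is an induced subgraph of $G'$, such an $S''$ also isolates $B_2$ in $G$. In case (1), if the pendant $w\in S'$, I replace $w$ by $v$ using $N_{G'}[w]\subseteq N_{G'}[v]$. In case (2), I replace any $b$ or $c$ in $S'$ by $v$, since $N_{G'}[b]\cup N_{G'}[c]\subseteq N_{G'}[v]$. The main obstacle is case (3), where $N_{G'}[a]=\{a,b,c,v\}$ is \emph{not} contained in $N_{G'}[v]=N_G[v]\cup\{a\}$, so a naive substitution would leave $b$ and $c$ undominated. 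The resolution again uses the structural observation: if $b,c$ become undominated they appear in $G'-N_{G'}[S'']$ only as an isolated edge (or, together with $a$, as an isolated triangle), neither of which contains or extends to a $B_2$. Hence I can delete any $b,c\in S'$ from $S'$ outright, and replace $a\in S'$ by $v$; each modification preserves $B_2$-isolation in $G'$ by a direct check, and the resulting $S''$ lies in $V(G)$ with $|S''|\le|S'|$.
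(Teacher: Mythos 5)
Your proof is correct and follows the same two-inequality template as the paper, but it is substantially more careful, and the extra care is genuinely needed. The paper's argument for (1) asserts that for an \emph{arbitrary} minimum $B_2$-isolating set $S'$ of $G'$, the set $S'\setminus\{x\}$ isolates $B_2$ in $G$; this is false as literally stated (take $G=B_2$ and attach the pendant $x$ at a vertex $v$ of degree $3$: then $\{x\}$ is a minimum $B_2$-isolating set of $G'$ since $G'-N_{G'}[x]=P_3$, yet $\emptyset$ does not isolate $B_2$ in $G$), and the correct move is exactly your substitution of $x$ by $v$ using $N_{G'}[x]\subseteq N_{G'}[v]$. The paper then dismisses (2) and (3) with ``can be proved similarly,'' whereas (3) is precisely the case where naive substitution or deletion changes which vertices of $G'$ are dominated: $N_{G'}[a]\not\subseteq N_{G'}[v]$, so replacing $a$ by $v$ may leave $b$ and $c$ exposed. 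Your structural observation that no copy of $B_2$ in $G'$ meets the new vertices (degree reasons for the pendant and for $b,c$; the fact that $a$ has only one neighbour outside $\{b,c\}$) is the missing ingredient that makes the deletion of $b,c$ and the replacement $a\mapsto v$ legitimate, since only domination inside $V(G)$ can matter. In short, your write-up supplies the details the paper omits and repairs the one step it states inaccurately, while staying within the same overall strategy.
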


\begin{proof}
(1) Let $S$ be a minimum $B_2$-isolating set of $G$. Then, clearly, $S$ is a $B_2$-isolating set of $G'$ and thus $\iota(G',B_2)\leq \iota(G,B_2)$. Let $S'$ be a minimum $B_2$-isolating set of $G'$ and let $x$ be the vertex of $V(G')\setminus V(G)$. Note that $S'\setminus \{x\}$ is a $B_2$-isolating set of $G$. Thus, $\iota(G,B_2)\leq \iota(G',B_2)$. Now the both inequalities imply the result.

(2) and (3) can be proved similarly as (1).
\end{proof}

\begin{lem}\label{th3}
Let $G$ be a connected graph of order $n$. If $\Delta(G)= 3$, then $$\iota(G,B_2)\leq \frac{n}{5}$$ except for $G\in \{B_2,K_4\}$.
\end{lem}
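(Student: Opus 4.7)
The plan is to prove the lemma by strong induction on the order $n$. The base case $n \leq 9$ is covered by Lemma \ref{lem3}, noting that the exception $Y$ there has $\Delta(Y)=4$ and so is not relevant here. For $n \geq 10$, I assume the lemma for all connected graphs of smaller order with maximum degree at most $3$.

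I would first apply Lemma \ref{lem5} to reduce to graphs free of certain peelable configurations. If $G$ contains a pendant edge, a triangle sharing exactly one vertex with the rest of $G$, or a triangle attached to $G$ by a single edge, then Lemma \ref{lem5} produces a smaller connected graph $G'$ with $\iota(G, B_2) = \iota(G', B_2)$; combining the induction hypothesis on $G'$ with the fact $n \geq 10$ yields $\iota(G, B_2) \leq n/5$. So I may assume $\delta(G) \geq 2$ and that none of these configurations appears. If $G$ is $B_2$-free then $\iota(G, B_2) = 0$ and we are done, so assume $G$ contains a diamond $D$ on vertices $\{a, b, c, d\}$ with central vertices $b, c$. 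Since $\Delta(G) = 3$, each of $b$ and $c$ has all three of its neighbors inside $D$, so only $a$ and $d$ can have external neighbors, and each has at most one such.

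The proof then splits into three cases based on the external neighbors of $a$ and $d$. In Case~(I) neither of $a, d$ has an external neighbor, so $D$ is a component of $G$ and $G = B_2$, a contradiction. In Case~(II) exactly one of them, say $a$, has an external neighbor $x$, and I set $S = \{x\}$. Then $\{b, c, d\}$ forms an isolated $K_3$-component of $G - N[x]$ contributing $0$ to the isolation number, and $|N[x]| \geq 3$ by $\delta(G) \geq 2$. Applying Lemmas \ref{lem1} and \ref{lem2} together with the induction hypothesis to each component of the residual graph of order at most $n - 6$ gives $\iota(G, B_2) \leq 1 + (n-6)/5 = (n-1)/5 < n/5$. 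In Case~(III) both $a$ and $d$ have external neighbors $x$ and $y$: if $x = y$, I set $S = \{x\}$ so that $\{b, c\}$ is an isolated $K_2$-component of $G - N[x]$; if $x \neq y$, I set $S = \{a\}$ so that $d$ becomes a pendant vertex of $G - N[a]$ attached to $y$, which may be peeled off via Lemma \ref{lem5}(1). In either sub-case applying induction to a residual graph of order $n - 5$ gives
\[
\iota(G, B_2) \leq 1 + \frac{n-5}{5} = \frac{n}{5}.
\]

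The principal obstacle is the application of the induction hypothesis within the components of the residual graph: the hypothesis excludes $B_2$ and $K_4$, so any residual component $C$ isomorphic to one of these contributes $\iota(C, B_2) = 1$ rather than $|V(C)|/5 = 4/5$, costing $1/5$ of slack per occurrence. I would argue structurally that, after the Lemma \ref{lem5} reductions, the number of exceptional components never exceeds the slack budget available in each case. A residual $K_4$-component would force an isolated $K_4$ inside $G$, contradicting connectivity together with $G \neq K_4$; a residual $B_2$-component must attach to $G$ only through the few ``ports'' available in $N[S]$, essentially $x$ and at most one or two of its other neighbors. When the budget is exhausted in a particular sub-case, $S$ can be re-chosen to directly include a vertex of the offending component, exploiting the fact that such a component together with $D$ supplies enough joint structure to augment $S$ by one vertex while still removing five vertices per element of $S$.
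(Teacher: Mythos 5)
Your skeleton is essentially the paper's: induct on $n$, use Lemma \ref{lem3} for $n\le 9$, pick a diamond $D$, observe that $\Delta(G)=3$ confines the two degree-$3$ vertices of $D$ to $D$ itself so that only the two degree-$2$ vertices can reach the rest of $G$, delete a closed neighborhood, and recurse on the components. The difference is that you openly defer the one step that constitutes almost all of the paper's proof: what to do when a component of the residual graph is isomorphic to $B_2$ (or $K_4$), so that the induction hypothesis returns $1$ instead of $4/5$. Your claim that ``the number of exceptional components never exceeds the slack budget available in each case'' is not something you can argue away structurally, because in your Case~(III) the slack is zero or $1/5$: with $S=\{a\}$ you get $\iota(G,B_2)\le 1+\frac{n-5}{5}=\frac{n}{5}$ only if \emph{no} residual component is exceptional, and a single $B_2$-component hanging off the external neighbor $x$ of $a$ (e.g.\ $x$ adjacent to two vertices of a diamond disjoint from $D$) already yields $1+1+\frac{n-9}{5}=\frac{n+1}{5}>\frac{n}{5}$. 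Such configurations genuinely occur under all your reductions (they contain no pendant vertices and no attached triangles), so the fallback ``re-choose $S$'' is not a remark but the actual content of the lemma.

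The paper resolves exactly this by an iterated two-level decomposition: after removing $N[u]\cup\{w\}$, if one component $G_1$ is a $B_2$ it passes to $G''=G'-V(G_1)-w_1$ where $w_1$ is the neighbor of $w$ in the other component, re-examines the components of $G''$, and in each terminal configuration exhibits an explicit small isolating set ($\{w\}$, $\{w,w_1\}$, or $\{w\}$ together with a vertex inside the residual $B_2$) whose closed neighborhood simultaneously destroys $D$ and the offending diamond(s); it also needs Lemma \ref{lem5}(3) to transfer the count when a $K_3$ is left dangling. None of this is routine bookkeeping --- the choice of replacement vertex depends on which components are exceptional and on where $w$ and $w_1$ attach --- and your proposal does not supply it. So the proposal correctly identifies the strategy and the obstacle but leaves the essential case analysis unproved; as written it does not establish the lemma.
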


\begin{proof}
Let $G$ be a connected graph of order $n$ with $\Delta(G)= 3$. The proof is by induction on $n$. By Lemma \ref{lem3}, the result is trivial if $n \leq 9$ or $\iota(G,B_2)=0$. Thus, suppose that $n\geq 10$ and $\iota(G,B_2)\geq 1$. Since $G$ contains $B_2$, it follows that there exists at least one vertex $u\in V(G)$ such that $d(u)=3$ and $G[N[u]]=B_2$. Let $N(u)=\{u_1,u_2,u_3\}$ and let $u_2$ be the another vertex of the $B_2$ with degree 3. As $G$ is connected and $\Delta(G)= 3$, then either $d(u_1)=3$ or $d(u_3)=3$. We distinguish the following two cases.

{\it Case 1.} $d(u_1)=3$ and $d(u_3)=2$ or $d(u_1)=2$ and $d(u_3)=3$

Without loss of generality, suppose $d(u_1)=3$ and $d(u_3)=2$. Let $w\in V(G-N[u])$ and $w$ is adjacent to $u_1$. Define $G'=G-N[u]-w$. Note that $|V(G')|=n-5 \geq 5$. Clearly, $\{u_1\}$ is a $B_2$-isolating set of $G$ if $G'$ is $B_2$-free. Suppose $G'$ contains $B_2$. If $G'$ is connected, by the induction hypothesis, $\iota(G',B_2)\leq \frac{n-5}{5}$. Then by Lemma \ref{lem1} and \ref{lem2}, we have $\iota(G,B_2)\leq |\{u_1\}|+\iota(G',B_2)\leq 1+\frac{n-5}{5}=\frac{n}{5}$.

Suppose that $G'$ is disconnected. It is easy to check that $d(w)=3$ and $G'$ has exactly two components. Let $G'=G_1+G_2$. If $G_1\neq B_2$ and $G_2\neq B_2$, the union of a minimum $B_2$-isolating set of $G_1$, a minimum $B_2$-isolating set of $G_2$ and $\{u_1\}$ is a $B_2$-isolating set of $G$. By the induction hypothesis and Lemma \ref{lem2},
$$\iota(G,B_2)\leq 1+\iota(G_1,B_2)+\iota(G_2,B_2)\leq 1+\frac{|V(G_1)|}{5}+\frac{|V(G_2)|}{5}=\frac{n}{5}.$$ If $G_1= B_2$ and $G_2= B_2$, we have $n=13$. Observe that $\{w\}$ is a $B_2$-isolating set of $G$. Hence $\iota(G,B_2)\leq \frac{n}{5}$. So, it remains to consider the case of exactly one of $\{G_1,G_2\}$ is isomorphic to $B_2$. Suppose that $G_1=B_2$ and $G_2\neq B_2$. Let $w_1$ be the neighbor of $w$ in $G_2$ and let $G''=G'-V(G_1)-w_1$. Note that $|V(G'')|=n-10$.

\begin{figure}[h]
  \centering
  \includegraphics[scale=0.8]{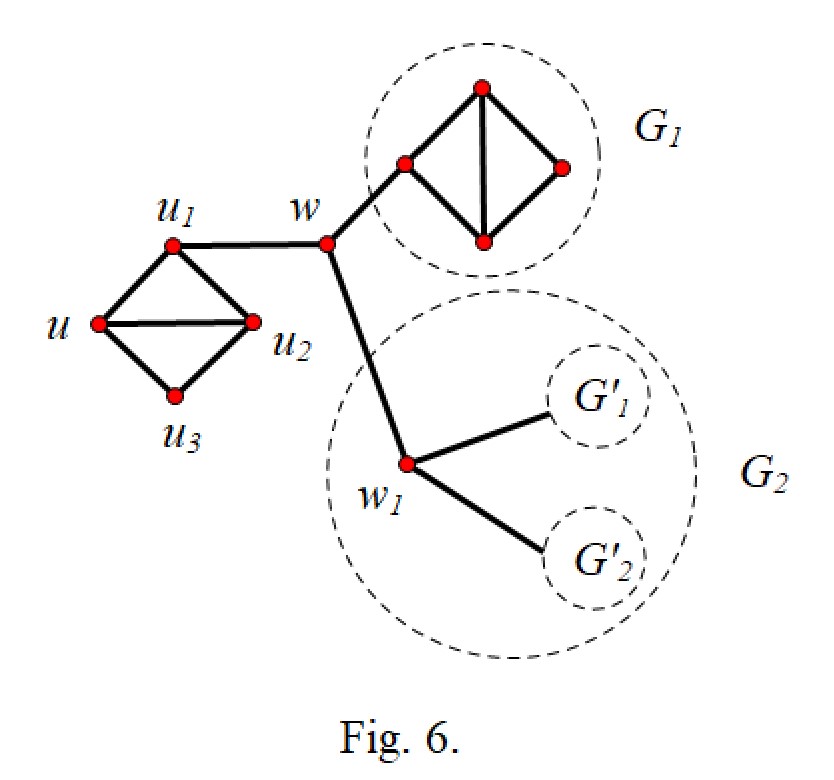}

\end{figure}

If $G''$ is connected and $G''\neq B_2$, by the induction hypothesis, $\iota(G'',B_2)\leq \frac{n-10}{5}$. Then the union of $\{w\}$ and a minimum $B_2$-isolating set of $G''$ is a $B_2$-isolating set of $G$. Thus, $\iota(G,B_2)\leq 1+\frac{n-10}{5}\leq \frac{n}{5}$. Observe that $n=14$ and $\{w,w_1\}$ is a $B_2$-isolating set of $G$ when $G''= B_2$. We also have $\iota(G,B_2)\leq \frac{n}{5}$. Suppose that $G''$ is disconnected. Recall that $\Delta(G)= 3$, then $d(w_1)=3$ and $G''$ has exactly two components. Let $G''=G'_1 + G'_2$ (see Figure 6). Now let us consider the components $G'_1$ and $G'_2$. If $G'_1\neq B_2$ and $G'_2\neq B_2$, then the union of a minimum $B_2$-isolating set of $G'_1$, a minimum $B_2$-isolating set of $G'_2$ and $\{w\}$ is a $B_2$-isolating set of $G$. Thus, by Lemma \ref{lem1}, \ref{lem2} and the induction hypothesis,
$$\iota(G,B_2)\leq 1+\iota(G'_1,B_2)+\iota(G'_2,B_2)\leq 1+\frac{|V(G'_1)|}{5}+\frac{|V(G'_2)|}{5}\leq \frac{n}{5}.$$ If $G'_1= B_2$ and $G'_2= B_2$, we have $n=18$ and $\{w,w_1\}$ is a $B_2$-isolating set of $G$. Hence $\iota(G,B_2)\leq \frac{n}{5}$. So, it remains to consider the case of exactly one of $\{G'_1,G'_2\}$ is isomorphic to $B_2$. Suppose that $G'_1=B_2$ and $G'_2\neq B_2$. Note that the union of a minimum $B_2$-isolating set of $G'_2$, the neighbor of $w_1$ in $G'_1$ and $\{w\}$ is a $B_2$-isolating set of $G$. Therefore, $\iota(G,B_2)\leq 1+1+ \frac{|V(G'_2)|}{5}\leq \frac{n}{5}$. This completes the proof of Case 1.

{\it Case 2.} $d(u_1)=3$ and $d(u_3)=3$

If $N(u_1)=N(u_3)$, denote $G^*=G\setminus (N[u_1]\cup \{u_3\})$. Then $G^*$ is connected and $|G^*|=n-5 \geq 5$ since $\Delta(G)= 3$ and $n\geq 10$. Observe that the union of $\{u_1\}$ and a minimum $B_2$-isolating set of $G^*$ is a $B_2$-isolating set of $G$. Hence, by the induction hypothesis, $\iota(G,B_2)\leq 1+\frac{|V(G^*)|}{5}=\frac{n}{5}$. Otherwise, there exist two vertices $w,z\in V(G)$ such that $u_1$ is adjacent to $w$ and $u_3$ is adjacent to $z$.

We first prove $G-N[u]$ is connected. Let $G'=G-N[u]-w$. Note that this case differs from Case 1 only in that there is an edge between $u_3$ and $G'$. By Lemma \ref{lem5} and the proof of Case 1, we have $\iota(G,B_2)\leq \frac{n}{5}$. Therefore, we omit the proof. Next we treat $G-N[u]$ is disconnected. Since $\Delta(G)=3$, then $G-N[u]$ contains exactly two components and $w$ and $z$ belong to different components. Define $G-N[u]=G_{w}+G_{z}$, where $G_{w}$ contains $w$ and $G_{z}$ contains $z$. Obviously, if $G_{w}=B_2$ or $G_{z}=B_2$, the union of $\{u_1\}$ and a minimum $B_2$-isolating set of $G_{z}$ or the union of $\{u_3\}$ and a minimum $B_2$-isolating set of $G_{w}$ is a $B_2$-isolating set of $G$, respectively. Hence $\iota(G,B_2)\leq \frac{n}{5}$. So, suppose that $G_{w}\neq B_2$ and $G_{z}\neq B_2$. Let $G'=G-V(G_{z})-N[u]-w$. By Lemma \ref{lem5} (3), we have $\iota(G_z,B_2)=\iota(G[V(G_z)\cup \{u,u_2,u_3\}],B_2)$. Similarly, using the same method of Case 1, we have $\iota(G,B_2)\leq \frac{n}{5}$. This completes the proof of Lemma \ref{th3}.
\end{proof}

So far, it remains to consider Theorem \ref{th1} when $\Delta(G)\geq 4$.

\begin{lem}\label{lem4}
The connected graph $Y$ of order 9 has the following properties: \\
(1) $\kappa(Y)=4$,\\
(2) $\Delta(Y)=\delta(Y)=4$,\\
(3) for any two vertices $u,v \in V(Y)$, $|N(u)\cap N(v)|\leq 2$,\\
(4) for any vertex $u\in V(Y)$, there exists a vertex $v\in V(Y)\setminus \{u\}$ such that the graph induced by $V(Y)\setminus (\{u\}\cup N[v])$ is $P_3$.
\end{lem}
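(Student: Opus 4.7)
The plan is direct structural verification from the explicit description of $Y$ given in Figure~2. First I would list the nine neighborhoods $N(x)$ for $x\in V(Y)$. Property (2) is then immediate, since every listed neighborhood has cardinality $4$. Property (3) follows by tabulating the $\binom{9}{2}=36$ pairwise intersections $|N(x)\cap N(y)|$ from the same list; symmetries of $Y$ collapse the enumeration into a small number of representative pairs, and each intersection turns out to have size at most $2$.

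Property (1) splits into two halves. The bound $\kappa(Y)\le\delta(Y)=4$ is immediate from (2). For $\kappa(Y)\ge 4$ I would show that no three-vertex subset disconnects $Y$. The cleanest route is Menger's theorem: exhibit four internally vertex-disjoint $x$-$y$ paths for each pair of non-adjacent vertices $x,y$, again reducing to a few orbit representatives. Alternatively, since $|V(Y)|=9$, the $\binom{9}{3}=84$ candidate $3$-cuts can be ruled out by a short sweep that checks connectivity of the six-vertex remainder using the adjacency list. This is the most computational step, but it is routine.

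Property (4) is the one that will be invoked in the proof of Theorem~\ref{th1}. The key observation is that $|N[v]|=5$ by (2), so for $V(Y)\setminus(\{u\}\cup N[v])$ to have exactly three vertices, as required for it to be isomorphic to $P_3$, we must have $u\notin N[v]$; equivalently, $v$ must be non-adjacent to $u$ (and $v\neq u$). For each of the nine choices of $u$, I would exhibit an explicit non-neighbor $v$ of $u$ and verify that the three remaining vertices induce exactly two edges forming a path (i.e., two edges and no triangle). The main obstacle is organizational rather than mathematical: nine brief and independent checks, most compactly presented as a small table of witness pairs $(u,v)$ together with the two resulting edges, with further compression possible via any involution of $Y$ exchanging the two ``halves'' visible in the figure.
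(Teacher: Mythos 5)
Your proposal is correct and takes essentially the same route as the paper, which simply states that the four properties ``are easy to check'' from the explicit drawing of $Y$ in Figure~2; your plan is a fleshed-out version of exactly that finite verification. The one substantive observation you add --- that in (4) the witness $v$ must be a non-neighbour of $u$ because $|N[v]|=5$ forces $|V(Y)\setminus(\{u\}\cup N[v])|=3$ only when $u\notin N[v]$ --- is a useful way to prune the search, and everything else is routine enumeration.
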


\begin{proof}
It is easy to check these properties of the graph $Y$ (see Figure 2).
\end{proof}

\begin{lem}\label{lem6}\cite{CH}
Let $G$ be a graph on $n$ vertices and $\mathcal{F}$ a family of graphs and let $ A\cup B$ be a partition of $V(G)$. Then
$$\iota(G,\mathcal{F})\leq \iota(G[A],\mathcal{F}) + \gamma(G[B]).$$
\end{lem}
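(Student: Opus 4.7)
The plan is to exhibit a small $\mathcal{F}$-isolating set of $G$ by gluing together two ingredients that are already provided by the right-hand side: a minimum $\mathcal{F}$-isolating set of the induced subgraph on $A$, call it $S_A$, and a minimum dominating set of the induced subgraph on $B$, call it $D_B$. Define $S=S_A\cup D_B$; then plainly $|S|\le|S_A|+|D_B|=\iota(G[A],\mathcal{F})+\gamma(G[B])$, so the whole content of the lemma is the verification that $S$ is an $\mathcal{F}$-isolating set of $G$.

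The key observation for that verification is that $N_G[S]$ absorbs all of $B$. Indeed, because $D_B\subseteq S$ and $D_B$ dominates $G[B]$, every vertex of $B$ lies in $N_{G[B]}[D_B]\subseteq N_G[S]$. Consequently, the ``survivors'' $V(G)\setminus N_G[S]$ form a subset of $A$, and in fact a subset of $A\setminus N_{G[A]}[S_A]$, because any vertex of $A$ that is closed-$G[A]$-neighborly to $S_A$ is already in $N_G[S]$.

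Once the survivors are confined to $A\setminus N_{G[A]}[S_A]$, the induced subgraph $G-N_G[S]$ becomes an induced subgraph of $G[A]-N_{G[A]}[S_A]$ (the edges between two vertices of $A$ are the same whether read off from $G$ or from $G[A]$). Since $S_A$ is an $\mathcal{F}$-isolating set of $G[A]$, the latter graph contains no member of $\mathcal{F}$ as a subgraph, and this property is inherited by every induced subgraph. Thus $G-N_G[S]$ is $\mathcal{F}$-free, which is what was needed.

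There is no real obstacle here: the only point that needs a tiny bit of care is the inclusion $V(G)\setminus N_G[S]\subseteq A\setminus N_{G[A]}[S_A]$, which reduces to the tautology that the $G$-closed neighborhood of a vertex subset contains its $G[A]$-closed neighborhood. Everything else is bookkeeping, and the bound on $|S|$ is additive by construction.
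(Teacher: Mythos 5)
Your proof is correct. The paper itself gives no proof of this lemma --- it is quoted from Caro and Hansberg \cite{CH} --- and your argument (take $S=S_A\cup D_B$, note that $D_B$ absorbs all of $B$ into $N_G[S]$ so the survivors lie in $A\setminus N_{G[A]}[S_A]$, and use that $\mathcal{F}$-freeness passes to induced subgraphs) is exactly the standard verification one would expect there, with the one delicate inclusion $V(G)\setminus N_G[S]\subseteq A\setminus N_{G[A]}[S_A]$ handled correctly.
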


\begin{lem}\label{th4}
Let $G$ be a connected graph of order $n$. If $\Delta(G)\geq 4$, then
$$\iota(G,B_2)\leq \frac{n}{5}$$ except for $G=Y$.
\end{lem}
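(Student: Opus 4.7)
The plan is induction on $n$ with Lemma \ref{lem3} as the base case $n \le 9$. For $n \ge 10$, I would pick $u \in V(G)$ with $d(u) = \Delta(G) \ge 4$ and set $G' = G - N[u]$. If $G'$ is $B_2$-free then $\{u\}$ itself is a $B_2$-isolating set and $1 \le n/5$ already suffices. Otherwise let $G_1,\ldots,G_k$ be the connected components of $G'$. By Lemma \ref{lem3} and the induction hypothesis applied componentwise via Lemma \ref{lem2}, each non-exceptional $G_i$ satisfies $\iota(G_i,B_2) \le |V(G_i)|/5$, while each exceptional $G_i \in \{B_2,K_4,Y\}$ satisfies $\iota(G_i,B_2) = |V(G_i)|/5 + 1/5$. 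Letting $e$ count the exceptional components among the $G_i$, Lemma \ref{lem1} yields
\begin{equation*}
\iota(G,B_2) \;\le\; 1 + \sum_{i=1}^k \iota(G_i,B_2) \;\le\; \frac{n + 4 - d(u) + e}{5},
\end{equation*}
which settles the lemma whenever $e \le d(u)-4$.

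Next I would tighten things in the borderline regime $d(u) = \Delta(G) = 4$. By Lemma \ref{lem4}(2), $\delta(Y) = 4$, so any $Y$-component in $G'$ would, by the connectivity of $G$, contain a vertex adjacent to $N(u)$ and thus of degree at least $5$ in $G$, contradicting $\Delta(G) = 4$; hence $Y$-components are impossible here, and only $B_2$- or $K_4$-components remain as obstructions. For such a component $G_i$, I would choose $w \in V(G_i)$ with $N_{G_i}[w] = V(G_i)$ (every vertex of $K_4$ and every axis vertex of $B_2$ qualifies) together with a neighbor $v \in N(u)$ of $w$, and pass to the isolating set $\{u,w\}$. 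Since $V(G_i) \subseteq N[w]$ and $v \in N[u] \cap N[w]$, the union $N[u] \cup N[w]$ has exactly $9$ vertices and the residual graph is the union of the other components of $G'$, totalling $n-9$ vertices.

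The main obstacle is closing the remaining $1/5$ gap, as the naive estimate $2 + (n-9)/5 = (n+1)/5$ still overshoots $n/5$. I would recover this $1/5$ in two ways. If any residual component $G_j$ has order not divisible by $5$, then integrality of $\iota$ gives $\iota(G_j,B_2) \le \lfloor |V(G_j)|/5 \rfloor \le |V(G_j)|/5 - 1/5$, absorbing the deficit. In the rigid subcases where every residual component has order a multiple of $5$ and attains its inductive bound with equality, I would either refine the choice of $w$ so as to enlarge $N[u] \cap N[w]$ (for instance, taking $w$ to be a tip of $B_2$ with two neighbors in $N(u)$) or else invoke Lemma \ref{lem6} with the partition $A = N[u] \cup V(G_i)$, $B = V(G) \setminus A$: the structural argument above gives $\iota(G[A],B_2) \le 2$, and the $\Delta(G) = 4$ constraint together with Lemma \ref{lem4}(3)--(4) controls the cross-edges tightly enough to bound $\gamma(G[B])$ by the required amount. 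The same plan, with additional slack from $d(u)-4 \ge 1$, handles the symmetric tight scenarios for larger $\Delta(G)$. The bulk of the remaining work is a structural case analysis of $G[N[u]]$ and the edges from $N(u)$ into $V(G')$, which is where the argument becomes genuinely technical.
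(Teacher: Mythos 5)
Your overall framework coincides with the paper's: induction on $n$, removal of $N[u]$ for a maximum-degree vertex $u$, componentwise application of Lemma \ref{lem2}, and the count $\iota(G,B_2)\le (n+4-d(u)+e)/5$ that isolates the exceptional components $B_2$, $K_4$, $Y$ as the only obstructions (your observation that $\delta(Y)=4$ rules out $Y$-components when $\Delta(G)=4$ is also in the paper). But the part you defer --- closing the $1/5$ deficit per exceptional component --- is precisely where the paper's proof lives, and your proposed ways of closing it do not work as stated. First, your choice of a vertex $w\in V(G_i)$ with $N_{G_i}[w]=V(G_i)$ \emph{and} a neighbour in $N(u)$ need not exist: when $G_i=B_2$ and only its degree-$2$ vertices are adjacent to $N(u)$, no dominating vertex of $G_i$ sees $N(u)$. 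This is exactly the paper's hardest configuration (its Subcases 1.4 and 2.4 and the closing argument about $u_1\notin N(x_2)$, $|N(x_2)\cap N(u)|\ge 2$), and it requires a genuinely different deletion set such as $\{u_2,u_3,x_1,x_2,x_3\}$ followed by a fresh connectivity analysis; your ``refine the choice of $w$'' remark does not supply this. Second, your fallback via Lemma \ref{lem6} has the partition the wrong way around: with $A=N[u]\cup V(G_i)$ and $B=V(G)\setminus A$ you would need $\gamma(G[B])\le |B|/5 - 1/5$, but the domination number of a graph on $m$ vertices can be as large as $m/2$, so this cannot be ``controlled by cross-edges.'' The workable use (and the paper's) puts the small exceptional piece into the $\gamma$-part, $B=V(G_1)\cup\{u_1\}$ with $\gamma(G[B])\le 1$ or $2$, and applies induction to the large part $A=V(G^*)$ --- which in turn forces a further case split on whether $G^*$ is connected and whether it is itself exceptional.

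Two smaller but real gaps: when several components of $G-N[u]$ are exceptional, removing $N[u]\cup N[w]$ still leaves $e-1$ exceptional components in the residue, so the deficit is $e/5$ rather than $1/5$ and your two-vertex set $\{u,w\}$ cannot absorb it; and your integrality trick $\iota(G_j,B_2)\le\lfloor |V(G_j)|/5\rfloor$ is valid only for non-exceptional components (for $G_j=B_2$ one has $\iota=1>\lfloor 4/5\rfloor$), so it provides no slack exactly where slack is needed. The proposal is therefore a correct reduction to the hard cases together with an accurate diagnosis of the difficulty, but the resolution of those cases --- the bulk of the paper's Subcases 1.1--1.4 and 2.1--2.4 --- is missing, and one of the two proposed repair mechanisms is unsound.
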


\begin{proof}
Let $G$ be a connected graph of order $n$ with $\Delta(G)\geq 4$. The proof is by induction on $n$. By Lemma \ref{lem3}, the result is trivial if $n \leq 9$ or $\iota(G,B_2)=0$. Thus, suppose that $n\geq 10$ and $\iota(G,B_2)\geq 1$. Denote by $d(u)=\Delta(G)$ and $H=G-N[u]$. Obviously, $\iota(G,B_2)=1$ if $H$ is $B_2$-free. If $H=B_2$ or $K_4$, $\iota(G,B_2)\leq 1+1= 2\leq \frac{n}{5}$ for $n\geq 10$. If $H=Y$, then $\Delta(G)\geq 5$. Hence we have $n\geq 15$ and $\iota(G,B_2)\leq 1+\iota(Y,B_2)=3 \leq \frac{n}{5}$. Suppose that $H\neq B_2, K_4,Y$. By Lemma \ref{th3} and the induction hypothesis, it is easy to check that $\iota(G,B_2)\leq \frac{n}{5}$ when $H$ is connected. Therefore, let $H=G_1+ G_2+ \cdots+ G_k$ with $k\geq 2$ and $|V(G_i)|=n_i$ for $i=1,2,\ldots,k$. If $H$ does not contain $B_2, K_4$ or $Y$ as a component, by Lemma \ref{lem1}, \ref{lem2}, \ref{th3} and the induction hypothesis, we have
$$\iota(G,B_2)\leq |\{u\}|+\sum_{i=1}^{k}\iota(G_i,B_2)\leq 1+\frac{n_1}{5}+\frac{n_2}{5}+\cdots+\frac{n_k}{5}=\frac{n-\Delta(G)+4}{5}.$$
Since $\Delta(G)\geq 4$, then $\iota(G,B_2)\leq \frac{n}{5}$.

Next suppose that at least one component of $H$ is $B_2, K_4$ or $Y$. We sort the components of $H$ in the order of $K_4$, $Y$, $B_2$ with one vertex of degree 3 of $B_2$ is adjacent to one vertex of $N(u)$, $B_2$ with only vertices of degree 2 of $B_2$ are adjacent to vertices of $N(u)$, and others. Then $G_1$ is isomorphic to $K_4$, $Y$, or $B_2$. Let $N(u)=\{u_1,u_2,\ldots,u_{\Delta(G)}\}$. Since $G$ is connected, without loss of generality, suppose $N(u_1)\cap V(G_1)\neq \phi$. Denote by $G^*=G-u_1-V(G_1)$. Obviously, $|V(G^*)|\geq 5$.

{\it Case 1.} $G^*$ is connected.

{\it Subcase 1.1.} $G_1=K_4$. If $G^*= Y$, we have $n=14$ and $\Delta(G)= 5$. By Lemma \ref{lem4} (4), there exists a vertex $v\in V(G^*)$ such that the graph induced by $G^*-u-N[v]$ is $P_3$. Since $\Delta(G)= 5$, we have $\{u_1,v\}$ is a $B_2$-isolating set of $G$ and hence $\iota(G,B_2)\leq 2 \leq \frac{n}{5}$. If $G^*\neq Y$, by the induction hypothesis and Lemma \ref{th3}, $\iota(G^*,B_2)\leq \frac{n-5}{5}$. Then, by Lemma \ref{lem6}, $\iota(G,B_2)\leq \gamma(G[V(G_1)\cup \{u_1\}])+\iota(G^*,B_2)\leq 1+\frac{n-5}{5}=\frac{n}{5}$.

{\it Subcase 1.2.} $G_1=Y$. Let $x$ be a neighbor of $u_1$ in $V(G_1)$. If $G^*= Y$, we have $n=19$ and $\Delta(G)= 5$. Then, by Lemma \ref{lem4} (4), there exist a vertex $v_1\in V(G^*)$ such that the graph induced by $G^*-u-N[v_1]$ is $P_3$ and a vertex $v_2\in V(G_1)$ such that the graph induced by $G_1-x-N[v_2]$ is $P_3$. Then $\{v_1,u_1,v_2\}$ is a $B_2$-isolating set of $G$ and $\iota(G,B_2)\leq 3\leq \frac{n}{5}$. If $G^*\neq Y$, similar to Subcase 1.1, $\iota(G,B_2)\leq \gamma(G[V(G_1)\cup \{u_1\}])+\iota(G^*,B_2)\leq 2+\frac{n-10}{5}=\frac{n}{5}$.

{\it Subcase 1.3.} $G_1=B_2$ and there is one vertex of degree 3 of $V(G_1)$ is adjacent to $u_1$. Let $x$ be a neighbor of $u_1$ in $V(G_1)$ and $d_{G[V(G_1)]}(x)=3$. If $G^*= Y$, we have $n=14$ and $\Delta(G)= 5$. Then, similarly, there exists $v\in V(G^*)$ such that $G^*-u-N[v]=P_3$. Define $P=P_3$. If $G^*-N[u_1]-N[v]$ has no $B_2$, then $\iota(G,B_2)\leq 2\leq \frac{n}{5}$. Otherwise, let $N(x)=\{x_1,x_2,x_3\}$ and let $d_{G[V(G_1)]}(x_2)=3$. Observe that $G^*-N[u_1]-N[v]$ contains $B_2$ if and only if $e(V(P),x_1)=3$ or $e(V(P),x_3)=3$. Assume that $e(V(P),x_3)=3$, then $d(x_3)=5$. By Lemma \ref{lem4} (1), $G-(N[x_3]\setminus\{x\})$ is a connected graph of order 9. Since $G-(N[x_3]\setminus\{x\})\neq Y$, by Lemma \ref{lem1} and \ref{lem3},
$$\iota(G,B_2)\leq |\{x_3\}|+ \iota(G-(N[x_3]\setminus\{x\}),B_2)\leq 1+1\leq \frac{n}{5}.$$ If $G^*\neq Y$, similar to Subcase 1.1, $\iota(G,B_2)\leq \gamma(G[V(G_1)\cup \{u_1\}])+\iota(G^*,B_2)\leq 1+\frac{n-5}{5}=\frac{n}{5}$.

{\it Subcase 1.4.} $G_1=B_2$ and only vertices of degree 2 of $V(G_1)$ are adjacent to $u_1$. Let $x$ be a neighbor of $u_1$ in $V(G_1)$ and $d_{G[V(G_1)]}(x)=2$ and Let $d_{G[V(G_1)]}(x_2)=2$. Note that the two remaining vertices of $V(G_1)\setminus\{x,x_2\}$ have degrees of 3 in $G$. First we prove the case of $u_1\in N(x_2)$ and the case $u_1\notin N(x_2)$ and $|N(x_2)\cap N(u)|\leq 1$ . If $G^*= Y$, we have $n=14$ and $\Delta(G)= 5$. By Lemma \ref{lem4} (4), there exists $v\in V(G^*)$ such that $G^*-u-N[v]=P_3$. Since $\Delta(G)= 5$, then $\{u_1,v\}$ is a $B_2$-isolating set of $G$ and $\iota(G,B_2)\leq 2 \leq \frac{n}{5}$. If $G^*\neq Y$, by the induction hypothesis,  Lemma \ref{lem5} (1) and \ref{th3}, $\iota(G,B_2)\leq 1+\frac{n-5}{5}=\frac{n}{5}$. It remains the case of $u_1\notin N(x_2)$ and $|N(x_2)\cap N(u)|\geq 2$, we will deal with it later.

{\it Case 2.} $G^*$ is disconnected.

It implies that $E(V(G_i),N(u))=E(V(G_i),u_1)$ for some $i\in\{2,3,\ldots,k\}$. Let us denote the components satisfying $E(V(G_i),N(u))=E(V(G_i),u_1)$ as $G_{11},G_{12},\ldots,G_{1t}$, $t\geq 1$. Let $G_u$ be the component contains $u$ in $G^*$. Then $G^*=G_{11}+G_{12}+\cdots+G_{1t}+G_u$. Assume that there are $s_1B_2$, $s_2K_4$ and $s_3Y$ in $\{G_{11},G_{12},\ldots,G_{1t}\}$.

{\it Subcase 2.1.} $G_1=K_4$. Let $x$ be a neighbor of $u_1$ in $V(G_1)$ and let $N(x)=\{x_1,x_2,x_3\}$. It is easy to check that $\iota(G,B_2)\leq1+\frac{|V(G_{11})|}{5}+\cdots+\frac{|V(G_{1t})|}{5}+\frac{|V(G_u)|}{5}=\frac{n}{5}$ if $G_{11},G_{12},\ldots,G_{1t},G_u\notin \{B_2,K_4,Y\}$.
If $G_u=K_4$, then $\Delta(G)=4$. Hence, by Lemma \ref{lem1}, \ref{lem2}, \ref{th3} and the induction hypothesis,
$$\iota(G,B_2)\leq |\{u_1\}|+\sum_{i=1}^{t}\iota(G_{1i}-N[u_1],B_2)\leq 1+s_3+\frac{n-(5+4+4s_1+4s_2+9s_3)}{5}\leq \frac{n}{5}.$$
If $G_u=B_2$, then $\Delta(G)=4$. Note that $G[N[u]\cup V(G_1)]-\{u,u_1,x\}$ contains $B_2$ if and only if $e(u_2,\{x_1,x_2,x_3\})= 2$ or $e(u_4,\{x_1,x_2,x_3\})= 2$. Without loss of generality, suppose $e(u_2,\{x_1,x_2,x_3\})= 2$. Then $d(u_2)=4$ and $G-N[u_2]$ is a connected graph of order $n-5$ or the union of a connected graph of order $n-6$ and an isolated vertex. By Lemma \ref{lem4}, $G-N[u_2]$ does not contain $Y$ as an induced subgraph. Hence, by the induction hypothesis and Lemma \ref{th3},
$$\iota(G,B_2)\leq |\{u_2\}|+\iota(G-N[u_2],B_2)\leq 1+\frac{n-5}{5}=\frac{n}{5}.$$ If $G_u=Y$, by Lemma \ref{lem4} (4), there exists $v\in V(G_u)$ such that $G_u-u-N[v]=P_3$. Note that $\Delta(G)=5$, then
$$\iota(G,B_2)\leq |\{u_1,v\}|+\sum_{i=1}^{t}\iota(G_{1i}-N[u_1],B_2)\leq 2+s_3+\frac{n-(5+9+4s_1+4s_2+9s_3)}{5}\leq \frac{n}{5}.$$
Suppose $G_u\neq B_2,K_4,Y$. Then at least one of $\{s_1,s_2,s_3\}$ is not less than one. Obviously,
$$\iota(G,B_2)\leq |\{u_1\}|+\sum_{i=1}^{t}\iota(G_{1i}-N[u_1],B_2)+ \iota(G_{u},B_2)\leq 1+s_3+\frac{n-(5+4s_1+4s_2+9s_3)}{5}\leq \frac{n}{5}$$ when $e(V(G_u),V(G_1)\setminus \{x\})=0$. For $e(V(G_u),V(G_1)\setminus \{x\})>0$, $G[V(G_u)\cup \{u_1\}\cup V(G_1)]-\{u_1,x\}\neq B_2,K_4$. If $G[V(G_u)\cup \{u_1\}\cup V(G_1)]-\{u_1,x\}=Y$, by Lemma \ref{lem4} (4), there exists $v$ such that $G[V(G_u)\cup \{u_1\}\cup V(G_1)]-\{u_1,x,u\}- N[v]=P_3$. Then we have
$$\iota(G,B_2)\leq |\{u_1,v\}|+\sum_{i=1}^{t}\iota(G_{1i}-N[u_1],B_2)\leq 2+s_3+\frac{n-(2+9+4s_1+4s_2+9s_3)}{5}\leq \frac{n}{5}.$$ Otherwise, $$\iota(G,B_2)\leq |\{u_1\}|+\iota(G[V(G_u)\cup V(G_1)\setminus\{x\}],B_2)+\sum_{i=1}^{t}\iota(G_{1i}-N[u_1],B_2)$$  $$\leq 1+s_3+\frac{n-(2+4s_1+4s_2+9s_3)}{5}\leq \frac{n}{5}.$$

{\it Subcase 2.2.} $G_1=Y$. Note that none of the components of $H$ is $K_4$. It follows that $s_2=0$. Let $x$ be a neighbor of $u_1$ in $V(G_1)$. It is easy to check that $\iota(G,B_2)\leq 2+\frac{|V(G_{11})|}{5}+\cdots+\frac{|V(G_{1t})|}{5}+\frac{|V(G_u)|}{5}=\frac{n}{5}$ if $G_{11},G_{12},\ldots,G_{1t},G_u\notin \{B_2,K_4,Y\}$. Since $\Delta(G)\geq 5$, we have $G_u\neq B_2,K_4$. If $G_u=Y$, then $\Delta(G)=5$. Similar to the proofs of Subcase 1.2 and Subcase 2.1, we have $$\iota(G,B_2)\leq 3+\sum_{i=1}^{t}\iota(G_{1i}-N[u_1],B_2)\leq 3+s_3+\frac{n-(19+4s_1+9s_3)}{5}\leq \frac{n}{5}.$$ Suppose $G_u\neq Y$. Then at least one of $\{s_1,s_3\}$ is not less than one. If $e(V(G_u),V(G_1)\setminus \{x\})=0$, we have $$\iota(G,B_2)\leq \iota(G_1+u_1,B_2)+\iota(G_u,B_2)+\sum_{i=1}^{t}\iota(G_{1i}-N[u_1])\leq 2+s_3+\frac{n-(10+4s_1+9s_3)}{5}\leq \frac{n}{5}.$$ Otherwise, $$\iota(G,B_2)\leq |\{u_1\}|+\iota(G[V(G_u)\cup V(G_1)\setminus\{x\}],B_2)+\sum_{i=1}^{t}\iota(G_{1i}-N[u_1],B_2)$$ $$\leq 1+s_3+\frac{n-(2+4s_1+9s_3)}{5}\leq \frac{n}{5}$$ since the component of $G[V(G_1)\cup\{u_1\}\cup V(G_u)]-\{u_1,x\}$ is not $B_2,K_4$ or $Y$.

{\it Subcase 2.3.} $G_1=B_2$ and there is one vertex of degree 3 of $V(G_1)$ is adjacent to $u_1$. Note that none of the components of $H$ is $K_4$ or $Y$. It follows that $s_2=s_3=0$. Let $x$ be a neighbor of $u_1$ in $V(G_1)$ and $d_{G[V(G_1)]}(x)=3$. It is easy to check that $\iota(G,B_2)\leq\frac{n}{5}$ if $G_{11},G_{12},\ldots,G_{1t},G_u\notin \{B_2,K_4,Y\}$.
If $G_u=K_4$, by the proof of the case of $G_u=B_2$ in Subcase 2.1, we have $\iota(G,B_2)\leq \frac{n}{5}$. If $G_u=B_2$, then $\Delta(G)=4$. Let $N(x)=\{x_1,x_2,x_3\}$ and let $d_{G[V(G_1)]}(x_2)=3$. Define $G'=G[V(G_u)\cup \{u_1\}\cup V(G_1)]-\{u,u_1,x\}$. Note that $G'$ contains $B_2$ when one of the following four cases is true. (1) $u_2$ is adjacent to $x_1$ and $x_2$ and $u_3$ is adjacent to $x_1$. (2) $u_2$ is adjacent to $x_2$ and $x_3$ and $u_3$ is adjacent to $x_3$. (3) $u_3$ is adjacent to $x_1$ and $u_4$ is adjacent to $x_1$ and $x_2$. (4) $u_3$ is adjacent to $x_3$ and $u_4$ is adjacent to $x_2$ and $x_3$. We can see that the proof methods are similar for the four cases. So let us just consider the first case. Then $G-N[u_2]$ is a connected graph with order $n-5$ or the union of a connected graph with order $n-6$ and a isolated vertex. By Lemma \ref{lem4}, $G-N[u_2]$ does not contain $Y$ as an induced subgraph. Thus
$$\iota(G,B_2)\leq|\{u_2\}|+\iota(G-N[u_2],B_2)\leq \frac{n}{5}.$$ If $G_u=Y$, we have $\Delta(G)=5$. By Lemma \ref{lem4} (4), there exists $v\in V(G_u)$ such that $G_u-u-N[v]=P_3$. Denote $P=P_3$. Then $G[V(G_u)\cup\{u_1\}\cup V(G_1)]-\{u,u_1,x\}-N[v]$ contains $B_2$ if and only if $e(x_1,V(P))=3$ or $e(x_3,V(P))=3$. Suppose $e(x_1,V(P))=3$. Then $d(x_1)=5$. By Lemma \ref{lem4} (1), $G-N[x_1]\setminus\{x\}$ is a connected graph with order $n-5$ and $G-N[x_1]\setminus\{x\}\neq Y$. Therefore,
$$\iota(G,B_2)\leq|\{x_1\}|+\iota(G-(N[x_1]\setminus \{x\}),B_2)\leq \frac{n}{5}.$$ Next suppose $G_u\notin \{B_2,K_4,Y\}$. Then $s_1\geq 1$. Obviously, $$\iota(G,B_2)\leq |\{u_1\}|+\iota(G_u,B_2)+\sum_{i=1}^{t}\iota(G_{1i}-N[u_1],B_2)\leq 1+\frac{n-5-4s_1}{5}\leq \frac{n}{5}$$ when $e(V(G_u),V(G_1)\setminus \{x\})=0$. If $e(V(G_u),V(G_1)\setminus \{x\})>0$, then $G[V(G_1)\cup\{u_1\}\cup V(G_u)]-\{u_1,x\}\neq B_2,K_4$. If $G[V(G_1)\cup\{u_1\}\cup V(G_u)]-\{u_1,x\}=Y$, by Lemma \ref{lem4} (4), there exists $v$ such that $G[V(G_1)\cup\{u_1\}\cup V(G_u)]-\{u,u_1,x\}-N[v]=P_3$. Then we have $$\iota(G,B_2)\leq |\{u_1,v\}|+\sum_{i=1}^{t}\iota(G_{1i}-N[u_1],B_2)\leq 2+\frac{n-(2+9+4s_1)}{5}\leq \frac{n}{5}.$$ Otherwise, $\iota(G,B_2)\leq |\{u_1\}|+\iota(G-u_1-x,B_2)\leq 1+\frac{n-(2+4s_1)}{5}\leq \frac{n}{5}$.

{\it Subcase 2.4.} $G_1=B_2$ and only vertex of degree 2 of $V(G_1)$ is adjacent to $u_1$. Let $x$ be a neighbor of $u_1$ in $V(G_1)$ and $d_{G[V(G_1)]}(x)=2$ and Let $d_{G[V(G_1)]}(x_2)=2$. Note that the two remaining vertices of $V(G_1)\setminus\{x,x_2\}$ have degrees of 3 in $G$. First we prove the case of $u_1\in N(x_2)$ and the case $u_1\notin N(x_2)$ and $|N(x_2)\cap N(u)|= 1$. It is easy to check that $\iota(G,B_2)\leq\frac{n}{5}$ if $G_2,\ldots,G_t,G_u\notin \{B_2,K_4,Y\}$. If $G_u=B_2$ or $K_4$, then $\Delta(G)\leq 4$ and hence $$\iota(G,B_2)\leq |\{u_1\}|+\sum_{i=1}^{t}\iota(G_{1i}-N[u_1],B_2)\leq 1+\frac{n-9-4s_1}{5}\leq \frac{n}{5}.$$ If $G_u=Y$, by Lemma \ref{lem5} (3), $\iota(G_u,B_2)=\iota(V(G_u)\cup (V(G_1)\setminus\{x\}),B_2)$. Furthermore, by Lemma \ref{lem4} (4), there exists $v\in V(G_u)$ such that $G_u-u-N[v]=P_3$. Then $$\iota(G,B_2)\leq |\{u_1,v\}|+\sum_{i=1}^{t}\iota(G_{1i}-N[u_1],B_2)\leq 2+\frac{n-5-9-4s_1}{5}\leq \frac{n}{5}.$$ Suppose $G_u\neq B_2,K_4,Y$. Then $s_1\geq 1$. We have $\iota(G,B_2)\leq |\{u_1\}|+\frac{n-5-4s_1}{5}\leq \frac{n}{5}$. It remains the case of $u_1\notin N(x_2)$ and $|N(x_2)\cap N(u)|\geq 2$.

In the end, we deal with the case of $u_1\notin N(x_2)$ and $|N(x_2)\cap N(u)|\geq 2$, whether $G^*$ is connected or not. Assume that $u_2,u_3\in N(x_2)$. Denote by $G''=G-\{u_2,u_3,x_1,x_2,x_3\}$. Obviously, if $G''$ is connected, then $G''\notin \{B_2,K_4,Y\}$ and we have $\iota(G,B_2)\leq 1+ \frac{n-5}{5}=\frac{n}{5}$. If $G''$ is disconnected, it implies that $E(V(G_i),N(u))=E(V(G_i),\{u_2,u_3\})$ for some $i\in\{2,3,\ldots,k\}$. Let us denote the components satisfying $E(V(G_i),N(u))=E(V(G_i),\{u_2,u_3\})$ as $G''_{11},G''_{12},\ldots,G''_{1t}$, $t\geq 1$ and let $G''_u$ be the component contains $u$ in $G''$. Then $G''=G''_{11}+G''_{12}+\cdots+G''_{1t}+G''_u$. Clearly, $G''_u\neq K_4$. By Lemma \ref{lem4} (3), $G''_u\neq Y$. And from the proof of above Subcase 2.4, we have $\iota(G,B_2)\leq \frac{n}{5}$ if any component of $\{G''_{11},G''_{12},\ldots,G''_{1t}\}$ is $ B_2$. In other words, $G''_{1i}\notin \{B_2,K_4,Y\}$ for $i\in \{1,\ldots,t\}$. Now, we distinguish two cases. If $G''_u\neq B_2$, then $$\iota(G,B_2)\leq |\{x_2\}|+\iota(G''_u,B_2)+\sum_{i=1}^{t}\iota(G''_{1i},B_2)\leq 1+\frac{n-5}{5}=\frac{n}{5}.$$ If $G''_u=B_2$, then $\Delta(G)=4$. Note that $|V(G''_u)\cup N[x_2]|=9$ and $\{u_2\}$ is a $B_2$-isolating set of $G[V(G''_u)\cup N[x_2]]$. Hence $$\iota(G,B_2)\leq |\{u_2\}|+\iota(G-(V(G''_u)\cup N[x_2]\setminus\{u_3\}),B_2)\leq 1+\frac{n-8}{5}\leq \frac{n}{5}.$$ This completes the proof of Lemma \ref{th4}.
\end{proof}

{\bf Proof of Theorem \ref{th1}.}
From Lemma \ref{lem9}, we can see that the bound is sharp. Combining the results in Lemma \ref{lem3}, \ref{th3}, \ref{th4}, we obtain Theorem \ref{th1}.
\hfill $\qed$

{\bf Acknowledgement}
This research was supported Science and Technology Commission of Shanghai Municipality (STCSM) grant 18dz2271000.


\begin{thebibliography}{WWW}

\bibitem{BM}J. A. Bondy, U.S.R. Murty, Graph Theory, in: GTM, vol. 244, Springer, 2008.
\bibitem{B}P. Borg, Isolation of cycles, Graphs Combin. 36 (2020), no. 3, 631–637.
\bibitem{BFK}P. Borg, K. Fenech, P. Kaemawichanurat, Isolation of $k$-cliques, Discrete Math. 343 (2020), no. 7, 111879, 5 pp.
\bibitem{BK}P. Borg, P. Kaemawichanurat, Partial domination of maximal outerplanar graphs, Discrete Appl. Math. 283 (2020), 306–314.
\bibitem{CH}Y. Caro, A. Hansberg, Partial domination - the isolation number of a graph, Filomat 31 (2017), no. 12, 3925–3944.
\bibitem{FK}O. Favaron, P. Kaemawichanurat, Inequalities between the $K_k$-isolation number and the independent $K_k$-isolation number of a graph, Discrete Appl. Math. 289 (2021), 93–97.
\bibitem{O}O. Ore, Theory of Graphs, in: American Mathematical Society Colloquium Publications, vol. 38, American Mathematical Society, Providence, RI,1962.
\bibitem{STP}S. Tokunaga, T. Jiarasuksakun, P. Kaemawichanurat, Isolation number of maximal outerplanar graphs, Discrete Appl. Math. 267 (2019), 215–218.
\bibitem{W}D. B. West, Introduction to Graph Theory, Prentice Hall, Inc., 1996.
\bibitem{ZW}G. Zhang, B. Wu, $K_{1,2}$-isolation in graphs, Discrete Appl. Math. 304 (2021), 365–374.
\end{thebibliography}
\end{document}